\newtheorem{thm}{Theorem}[section]
\newtheorem{cor}[thm]{Corollary}
\theoremstyle{definition}
\theoremstyle{remark}
\newtheorem{rem}[thm]{Remark}
\numberwithin{equation}{section}
\newcommand{\be}{\begin{equation}}
\newcommand{\ee}{\end{equation}}
\newcommand{\R}{\mathbb R}
\newcommand{\eps}{\varepsilon}
\newcommand{\p}{\partial}
\newcommand{\comment}[1]{}
\begin{document}

\title[Boundary Harnack Principle]{On the Boundary Harnack Principle for operators with different lower order terms}
\author{D. De Silva}
\address{Department of Mathematics, Barnard College, Columbia University, New York, NY 10027}
\email{\tt  desilva@math.columbia.edu}
\author{O. Savin}
\address{Department of Mathematics, Columbia University, New York, NY 10027}\email{\tt  savin@math.columbia.edu}

\begin{abstract}We provide the classical Boundary Harnack principle in Lipschitz domains for solutions to two different linear uniformly elliptic equations with the same principal part.
 \end{abstract}

\maketitle
\begin{center}
{\it To our friend Sandro, who taught us about Boundary Harnack and much more.}
\end{center}

\section{Introduction}

The classical Boundary Harnack Principle (BHP) states that two positive harmonic functions that vanish on a portion of the boundary of a Lipschitz domain must be comparable up to a multiplicative constant. This statement appears first in works of Ancona, Dahlberg, Kemper, and Wu  respectively \cite{A,D,K,W}. In order to state it precisely and present its generalizations, we introduce some notation.
Given a Lipschitz function $g$, with $$g: B'_1 \subset \R^{n-1} \to \R, \quad g \in C^{0,1}, \quad \|g\|_{C^{0,1}} \leq L, \quad  g(0)=0,$$ 
we denote by $\Gamma$ its graph, 
$$\Gamma:= \{x_n =g(x')\},$$
and define the cylindrical region of radius and height $r$ above $\Gamma$ as
$$\mathcal C_r:=\{(x', x_n) \ : \ |x'| <r,  \  0 <  x_n -g(x') <  r\}.$$ 
The BHP states the following.
\begin{thm}\label{main_intro} Let $u,v>0$ solve $$\Delta u=\Delta v=0 \quad \text{in $\mathcal C_1$},$$ and vanish continuously on $\Gamma$. Then,\begin{equation}\label{BHI} C^{-1}\frac v u \left(\frac 12 e_n\right) \leq \frac v u \leq C \frac v u \left(\frac 12 e_n\right) \quad \text{in $\mathcal C_{1/2},$}\end{equation} with $C$ depending on $n$
and $L$. \end{thm}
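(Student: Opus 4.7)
I would follow the classical three-step proof of the Boundary Harnack Principle in Lipschitz domains. The Lipschitz geometry provides, at every scale $r\in(0,1]$, a corkscrew point $A_r\in \mathcal{C}_r$ at distance $\sim r$ from $\Gamma$, and any two interior points at distance $\geq \rho$ from $\Gamma$ can be joined by a Harnack chain of length $N=N(n,L,\rho)$. Interior Harnack applied along such chains gives $w(x)\sim w(y)$ for every nonnegative harmonic $w$, with constants depending only on $n$, $L$, and the chain length.

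\textbf{Step 2 (Carleson estimate).} Next I would establish that if $w\geq 0$ is harmonic in $\mathcal{C}_r$ and vanishes continuously on the piece of $\Gamma$ in its closure, then $\sup_{\mathcal{C}_{r/2}} w \leq C\,w(A_r)$, with $C=C(n,L)$ independent of the scale. The standard proof is a dyadic stopping-time argument: were $w$ to exceed $M\,w(A_r)$ at some point $x_0\in \mathcal{C}_{r/2}$, one could iterate interior Harnack between corkscrew points at successively finer scales to produce a sequence of points where $w$ grows geometrically, eventually contradicting the finiteness of $w(A_r)$.

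\textbf{Step 3 (Oscillation decay for $v/u$).} With the two ingredients above, I would close the proof by an oscillation-decay argument. Set
\[
    M_r:=\sup_{\mathcal{C}_r}\frac{v}{u},\qquad m_r:=\inf_{\mathcal{C}_r}\frac{v}{u},
\]
and aim for the geometric improvement $M_{r/2}-m_{r/2}\leq \theta\,(M_r-m_r)$ with $\theta=\theta(n,L)<1$. Rescale so $m_r=0$ and $M_r=1$, so that $0\leq v\leq u$ on $\mathcal{C}_r$ and $w:=u-v\geq 0$ is harmonic vanishing on $\Gamma$. At $A_r$ one of $v(A_r),w(A_r)$, say $w(A_r)$, is $\leq \tfrac12 u(A_r)$; Step 2 applied to $w$ at scale $r$ yields $w\leq \tfrac{C}{2}u(A_r)$ on $\mathcal{C}_{r/2}$, while Step 1 gives $u(A_r)\sim u(A_{r/2})$. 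Combining these with the Carleson upper bound on $u$ forces $v/u$ to lie in a strictly shorter interval on the interior core of $\mathcal{C}_{r/2}$. Iterating this improvement dyadically produces a Hölder modulus of continuity of $v/u$ up to $\Gamma$; combined with interior Harnack chains and the normalization $u(\tfrac12 e_n)=v(\tfrac12 e_n)=1$, this yields \eqref{BHI}.

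\textbf{Main obstacle.} The delicate point is Step 3. A one-sided Carleson bound alone produces an upper estimate for $v/u$ that degenerates near $\Gamma$, so the strict improvement $\theta<1$ requires converting upper bounds on $w$ into pointwise lower bounds on $v$ while simultaneously controlling $u$ both from above (Carleson) and from below near corkscrew points (interior Harnack). The Lipschitz structure is essential, as it is what guarantees corkscrew points at every scale and uniformly bounded-length Harnack chains connecting them across consecutive scales.
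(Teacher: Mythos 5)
Your Steps 1 and 2 (Harnack chains and the Carleson estimate) are correct and are indeed two of the standard ingredients. Note, however, that the paper does not prove Theorem \ref{main_intro} at all: it is the classical result of Ancona--Dahlberg--Kemper--Wu, quoted as background (the authors' own short proof is in \cite{DS1}), and it is then \emph{used} as a black box in the proof of Theorem \ref{main}. So the real question is whether your Step 3 supplies the remaining, genuinely hard ingredient. It does not.

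The gap is twofold. First, the oscillation-decay scheme cannot be initialized: normalizing $m_r=0$, $M_r=1$ presupposes $\sup_{\mathcal C_r} v/u<\infty$, and that finiteness is exactly the upper bound in \eqref{BHI} that you are trying to prove. Carleson plus Harnack chains give $v\leq C\,v(A_r)$ in $\mathcal C_{r/2}$, but they give no pointwise lower bound on $u$ near $\Gamma$ (where $u\to 0$), hence no bound on $v/u$. Second, even granting finiteness, the improvement you obtain from $w(A_r)\leq \tfrac12 u(A_r)$ via Carleson is $w\leq \tfrac{C}{2}u(A_r)$ on $\mathcal C_{r/2}$, which yields $w\leq (1-c)u$ only where $u\gtrsim u(A_r)$, i.e.\ on the ``interior core'' as you say. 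That is not a decay of $M_{r/2}-m_{r/2}$ over the full cylinder $\mathcal C_{r/2}$, so the dyadic iteration never controls $v/u$ near $\Gamma$ at any scale and does not close. Converting the bound $w\leq C\,w(A_r)$ into the pointwise bound $w\leq C\,\frac{w(A_r)}{u(A_r)}\,u$ up to $\Gamma$ \emph{is} the Boundary Harnack Principle; your argument is circular at precisely this point, as your ``Main obstacle'' paragraph half-acknowledges without resolving. What is missing is the one genuinely nontrivial lemma of the subject: either the comparison of a positive harmonic function vanishing on $\Gamma$ with the harmonic measure $\omega$ of $\partial\mathcal C_r\setminus\Gamma$ from both sides (the lower bound $u\geq c\,u(A_r)\,\omega$ is the hard direction, going back to Dahlberg and to Caffarelli--Fabes--Mortola--Salsa via the Green function), or an iteration in dyadic boundary slabs as in \cite{DS1}, where one shows $v\leq C\bigl(v(A_1)\,u+\varepsilon\sup_{\{0<x_n-g<a_k\}}v\bigr)$ and sums a geometric series. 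Once \eqref{BHI} is established by one of these routes, your Step 3 is the standard (and correct) argument for the stronger statement that $v/u$ is H\"older continuous up to $\Gamma$ --- but it is a corollary of the BHP, not a proof of it.
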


Sandro Salsa played a central role in extending the BHP to uniformly elliptic operators in both divergence  and non-divergence form. The result was established first for operators in divergence form by Caffarelli, Fabes, Mortola and Salsa in \cite{CFMS}, while the case of operator in non-divergence form was treated by Fabes, Garofalo, Marin-Malave and Salsa in \cite{FGMS}. The BHP for operators in divergence form was extended also to more general NTA domains by Jerison and Kenig in \cite{JK}. The case of H\"older domains and operators in divergence form was  addressed with probabilistic techniques by Bass and Burdzy, and Banuelos, Bass and Burdzy in \cite{BB1,BBB} respectively. An analytic proof was then provided by Ferrari in \cite{F}. For H\"older domains and operators in non-divergence form, it is necessary that the domain is $C^{0,\alpha}$ with $\alpha>1/2$ (or that it satisfies a uniform density property) - a proof based on a probabilistic approach can be found in the work of Bass and Burdzy \cite{BB2}, while a subsequent analytic proof was given in \cite{KS} by Kim and Safonov.
In \cite{DS1} we provided a unified analytic proof of Theorem \ref{main} which holds for both operators in non-divergence and in divergence form and with lower order terms, and extends also to the case of H\"older domains \cite{DS2}. 

It is well known that the BHP can fail in the presence of a right hand side. The typical example to have in mind is when $\Gamma$ is a 2D cone of opening $\alpha$ and 
$\Delta u=0$, $\Delta v=-1$ in $\mathcal C_1$. Then the BHP only holds if the quadratic error induced by the right hand side is small compared to the growth of the harmonic function near the origin in $\mathcal C_1$. This means that the BHP holds if $\alpha > \pi/2$ and it fails otherwise.
The general case of equations with right hand sides in Lipschitz domains was considered by Allen and Shahgholian in \cite{AS} where a sharp criteria for the BHP to hold was established. This result was then extended to a larger class of operators in \cite{AKS}, by the same two authors and Kriventsov. In particular, the BHP holds for linear uniformly elliptic operators with right hand side in $L^\infty$ only if the Lipschitz constant $L$ of the domain is sufficiently small.

In this note, we provide versions of Theorem \ref{main_intro} for solutions to different uniformly elliptic homogeneous equations with the same principal part on $\Gamma$. Our results say that the BHP continues to hold under quite general assumptions on the operators and domain if the two linear operators differ only in the lower order terms. These results do not follow from the above mentioned works for inhomogeneous equations, just by treating the lower order terms as a right hand side.

The motivation for this type of result comes from the theory of nonlocal equations. After localizing a translation invariant kernel to a compact set, the remaining contributions from the far away interactions have the effect of introducing a lower zeroth order term whose $L^\infty$ coefficient depends on the solution itself. In this localized setting, two solutions solve two different equations that differ from each other in the lowest order term. Indeed, a nonlocal version of the results presented here is used in \cite{DSV} to establish the boundary regularity for graphical nonlocal minimal surfaces. 

\subsection{Main Results} For clarity of exposition, we present here the precise formulation of our result in the setting of operators with different zero order terms and with no first order terms.

Denote $$\mathcal{L}_{c} u : = tr(A(x)D^2u) + cu,$$ with $A$ uniformly elliptic, i.e
$$\lambda I \leq A \leq \Lambda I,$$ and $c \in L^\infty.$ Our main basic result for two different operators $\mathcal L_{c}$ and $\mathcal L_{c'}$ is stated below. Constants depending possibly on $n,\lambda, \Lambda, L$ and $\|c\|_{L^\infty}, \|c'\|_{L^\infty}$ are called universal.

\begin{thm}\label{main}
Let $u, v$ solve $$ \text{$\mathcal L_{c} u = 0=\mathcal L_{c'} v = 0$ in $\mathcal C_1$,}$$ vanish continuously on $\Gamma$, and let $u>0$. Then
$$\frac{v}{u} \in C^{0,\alpha}(\mathcal C_{1/2}),$$ for some $0<\alpha<1$ universal and \begin{equation}\label{bound1intro}\left \|\frac v u  \right \|_{C^{0,\alpha}(\mathcal C_{1/2})} \leq C \frac{\|v\|_{L^\infty(\mathcal C_1)}}{u\left(\frac 1 2 e_n\right)},\end{equation} with $C>0$ universal.  If $v>0$ then the ratio is bounded below in $\mathcal C_{1/2}$ and the right hand side in \eqref{bound1intro} can be replaced by $C \dfrac v u \left(\frac 12 e_n\right)$.\end{thm}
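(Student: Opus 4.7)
The plan is to adapt the analytic oscillation-decay proof of the classical BHP from \cite{DS1} by treating the mismatch in the zeroth-order coefficients as a structured perturbation: crucially, the inhomogeneity generated when taking differences of the two equations is controlled by the solutions $u$ and $v$ themselves, not by a generic $L^\infty$ right-hand side, so the obstructions of \cite{AS,AKS} do not apply. I would first reduce to the case $v>0$ by replacing $v$ with $v+\lambda u$ for $\lambda \sim \|v\|_{L^\infty(\mathcal C_1)}/u(\tfrac12 e_n)$ chosen (via Carleson and interior Harnack) so that the combination is positive in $\mathcal C_1$. As a base step, one establishes an initial bound $m_0 u\leq v\leq M_0 u$ on a cylinder $\mathcal C_{r_*}$ with $M_0-m_0\leq C(v/u)(\tfrac12 e_n)$, by applying the ABP maximum principle to $M_0 u-v$ and $v-m_0 u$ (whose $\mathcal L_{c'}$-equations carry an inhomogeneity driven by $(c-c')u$) together with Carleson data on $\partial\mathcal C_{r_*}$. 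The main claim is then the iterative oscillation decay: at each boundary point $x_0\in\Gamma$ near the origin and every $k\geq 0$ there exist universal $r_0,\mu\in(0,1)$ and constants $m_k\leq M_k$ with $M_k-m_k\leq \mu^k(M_0-m_0)$ such that $m_k u\leq v\leq M_k u$ in the translated cylinder $\mathcal C_{r_0^k r_*}(x_0)$; a standard interior Harnack argument converts this into the $C^{0,\alpha}$ bound \eqref{bound1intro}.

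For the inductive step, fix $x_0=0$, set $r=r_0^k r_*$, and define $w:=v-m_k u\geq 0$ in $\mathcal C_r$. Using $\mathcal L_{c'}v=\mathcal L_c u=0$ one computes
\[
\mathcal L_{c'} w \;=\; -m_k\mathcal L_{c'} u \;=\; m_k(c-c')u,
\]
so $w$ solves a uniformly elliptic equation with inhomogeneity pointwise dominated by $\|c-c'\|_{L^\infty} m_k u$. A dichotomy at the reference point $p_k=(r/2)e_n$ lets us assume $w(p_k)\geq\tfrac12(M_k-m_k)u(p_k)$; the opposite case is handled symmetrically using $M_k u-v$. Rescaling via $\tilde u(x)=u(rx)/u(re_n)$ and $\tilde w(x)=w(rx)/[(M_k-m_k)u(re_n)]$ produces functions that are $O(1)$ on $\mathcal C_1$, vanish on the rescaled Lipschitz boundary, satisfy $\tilde w(\tfrac12 e_n)\geq c_0$ universal, and obey a rescaled uniformly elliptic equation whose right-hand side is pointwise of size
\[
\|c-c'\|_{L^\infty}\cdot r^2\cdot\frac{m_k}{M_k-m_k}\cdot\tilde u.
\]

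I would then combine the same-operator BHP from \cite{DS1} for $\mathcal L_{c'}$, applied to the homogeneous part of $\tilde w$, with an ABP bound for the inhomogeneous part to obtain $\tilde w\geq\theta_0\tilde u$ on $\mathcal C_{r_0}$ for some universal $\theta_0>0$; unscaling produces the updated pair $m_{k+1}=m_k+\tfrac12\theta_0(M_k-m_k)$, $M_{k+1}=M_k$, closing the induction with $\mu=1-\theta_0/2$. The main obstacle, and the place where the two-operator structure truly bites, is that the perturbation coefficient $m_k/(M_k-m_k)$ grows like $\mu^{-k}$ across iterations (since $m_k$ stays bounded while $M_k-m_k\leq\mu^k(M_0-m_0)$), so the rescaled right-hand side at scale $r_0^k$ is of order $(r_0^2/\mu)^k$. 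Choosing $r_0$ universal with $r_0^2<\mu$ keeps this perturbation geometrically small, at the cost of a reduced universal Hölder exponent $\alpha=\log(1/\mu)/\log(1/r_0)<1$; the finitely many initial scales at which the error is not yet small contribute only to the universal constant $C$ in \eqref{bound1intro}. A further subtlety to handle carefully is that the classical BHP compares the homogeneous part of $\tilde w$ against an $\mathcal L_{c'}$-barrier, not against $\tilde u$ which solves $\mathcal L_c=0$; bridging the two barriers at scale $1$ introduces one additional, universally small perturbation that must be absorbed into the overall error budget.
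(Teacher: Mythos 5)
Your scheme has a genuine gap, and it sits exactly at the point where you claim the Allen--Shahgholian obstruction ``does not apply.'' Both your base step and your inductive step require converting an additive error controlled by ABP into an inequality of the form $v\ge m\,u$ (or $\tilde w\ge\theta_0\tilde u$) that holds \emph{up to} $\Gamma$. ABP applied to the inhomogeneous part gives only $|\tilde w_{\mathrm{inhom}}|\le C\eps_k$ in $L^\infty$; hence your conclusion reads $\tilde w\ge c_1\tilde\phi-C\eps_k$, and near $\Gamma$ both $\tilde\phi$ and $\tilde u$ tend to zero (like $d^\gamma$ with $\gamma$ possibly large when $L$ is large) while the constant $C\eps_k$ does not. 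So $c_1\tilde\phi-C\eps_k$ is negative in a boundary layer and the inequality $\tilde w\ge\theta_0\tilde u$ on all of $\mathcal C_{r_0}$ cannot be concluded, no matter how small $\eps_k$ is; the induction does not close. The special structure $f=(c-c')u$ is real and is why the theorem is true, but you discard it the moment you invoke ABP: to exploit it you would need something like $|\tilde w_{\mathrm{inhom}}|\le C\eps_k\tilde u$ pointwise, which is itself a boundary-Harnack statement for the inhomogeneous problem and cannot be assumed. The same defect already invalidates the base step ($M_0u-v\ge0$ on $\p\mathcal C_{r_*}$ plus ABP only yields $M_0u-v\ge-Cr_*^2$, not a sign), and your reduction to $v>0$ via $v+\lambda u$ has a smaller version of the same problem, since $\mathcal L_{c'}(v+\lambda u)=\lambda(c'-c)u\ne0$; the paper instead decomposes $v=V^+-V^-$ with $V^\pm$ genuine $\mathcal L_{c'}$-solutions.

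The paper's proof supplies precisely the mechanism you are missing, and it is worth seeing why it is not an ABP step. One compares $u$ not to $v$ directly but to a sequence of \emph{homogeneous} replacements $w_k$ ($\mathcal L_0w_k=0$ in $\mathcal C_{2^{-k}}$, $w_k=u$ on the boundary). The perturbative estimate (Claim 1) is only an $L^\infty$ bound relative to the center value, $|w_k-u|\le Cr_k^\beta w_k(r_k e_n/2)$ --- no attempt is made to make the error small relative to $u$ near $\Gamma$ at this stage. The key observation (Claim 2) is that the difference $w_{k+1}-w_k$ of two consecutive replacements solves the \emph{same homogeneous equation} $\mathcal L_0=0$ and vanishes on $\Gamma$, so the classical one-operator BHP applied to the pair $(w_{k+1}-w_k,\,w_{k+1})$ upgrades the $L^\infty$ smallness to the multiplicative bound $|w_{k+1}/w_k-1|\le Cr_k^\beta$ everywhere, including at the boundary. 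A telescoping product then shows $u/u_0$ converges geometrically to constants $a_k\sim1$ at dyadic scales, and interior H\"older estimates on nontangential balls plus a covering argument give \eqref{bound1intro}. If you want to keep an oscillation-decay formulation, you must either carry an explicit additive error term in the induction hypothesis (which, once made summable relative to $u$ at the center, essentially reproduces the paper's Claims 1--3) or prove a weighted estimate $|\tilde w_{\mathrm{inhom}}|\lesssim\eps_k\tilde u$ via Green's function bounds; as written, the proposal does neither.
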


For simplicity, we assume throughout the paper that $A$ is continuous so that solutions are continuous up to the boundary. However, the constants involved in our estimates do not depend on the modulus of continuity of $A$.

The proof of the result above can be adapted to larger classes of operators. We refer to Section 3 for the precise statement of the corresponding results. In particular, Theorem \ref{main} continues to hold for linear operators with dependence on $\nabla u$, that is operators of the form 
$$\mathcal L_{{\bf b}, c} u := tr(A(x)D^2u) + {\bf b}(x) \cdot \nabla u + c(x)u,$$ with
${\bf b} \in L^\infty.$ In this setting, when $n \geq 3$, 
we assume further that $A(x)$ has small oscillation at small scales (Theorem \ref{main22}).
For equations with the same first order coefficient ${\bf b}$ this assumption is not necessary. We also establish a nontangential BHP version for the case of two different operators with second order H\"older coefficients that only coincide at a point on $\Gamma$ (Theorem \ref{main3}). In particular it shows that the boundary behavior of a positive solution to such a second order equation near a boundary point $x_0 \in \Gamma$ is obtained by freezing the second order coefficients of the operator at $x_0$, and by ignoring the lower order terms. Finally, we can extend Theorem \ref{main} to include equations in divergence form (Theorem \ref{main23} and Theorem \ref{main33}).
We remark that our results extend easily to NTA domains as well.

The paper is organized as follows. Section 2 is dedicated to the proof of our main Theorem \ref{main}. The next section provides the adaptation of the proof to other classes of operators. 

\section{The proof of Theorem \ref{main}}

In this section we provide the proof of Theorem \ref{main}. As mentioned in the Introduction, constants depending possibly on $n,\lambda, \Lambda, L$ and $\|c\|_{L^\infty}, \|c'\|_{L^\infty}$ will be called universal. 

For $a,b>0$, we use the notation $$a\sim b $$ to denote the inequality $$C^{-1} b \leq a \leq Cb, \quad \quad \text{$C>0$ universal.}$$ Universal constants may change from line to line in the body of the proof (here and in the remaining of the paper).

\begin{proof} Assume after multiplication by a constant that $u\left(\frac 1 2 e_n\right)=1,$ and let $u_0$ be a positive solution to 
$$\mathcal L_0 u_0 = 0 \quad \text{in $\mathcal C_1$}, \quad 
u_0\left(\frac 1 2 e_n\right)=1, $$ which vanishes on $\Gamma.$ In order to prove our result, 
it suffices to show that $\dfrac{u}{u_0}$ is a $C^{0, \alpha}$ function bounded below by a universal positive constant, since we can write 
$$\frac v u = \frac{v}{u_0} \cdot \frac{u_0}{u} \  \ \ .$$ Then $v$ can be written as
$$\mbox{$v = V^+-V^-$, with $\mathcal L_{c} V^\pm=0$ in $\mathcal C_1$ and $V^\pm = v^\pm$ on $\p \mathcal C_1.$}$$ Here $v^\pm$ denote the positive and negative part of $v$, that is $$\mbox{$v^+:= \max\{v,0\}$ and $v^-:= -\min\{v,0\}$.}$$ Thus, the ratio $\dfrac{V^\pm}{u_0}$ (after normalization) will also be a $C^{0,\alpha}$ function bounded below by a positive universal constant.

\smallskip

We prove now that the ratio $\dfrac{u}{u_0}$ is H\"older continuous.
Denote $r_k:= 2^{-k}$, $k=1,2,\ldots$ and let $w_k$ be the solution to 
$$\mathcal L_0 w_k =0 \quad \text{in $\mathcal C_{r_k}$,}$$
$$w_k = u \quad \text{on $\p \mathcal C_{r_k}.$}$$ 

We divide our proof in three steps.

\medskip

\noindent{\it Claim 1.}  For all $k \geq k_0$ universal, \begin{equation}\label{bound2} |w_k - u| \leq C r_k^\beta \ w_k\left(r_k \frac{e_n}{2}\right) \quad \text{in $\mathcal C_{r_k}$,} \end{equation}
for some $C, \beta>0$ universal.

\begin{proof}After a dilation, 
$$\tilde w(x) : = w_k(r_k x), \quad \tilde u(x):= u(r_k x),$$
we have that 
$$\tilde{\mathcal L_0} \tilde w=0, \quad \tilde{\mathcal L}_{\tilde c} \tilde u:= \tilde{\mathcal L_0} \tilde u + \tilde c \tilde u =0 \quad \text{in $\tilde {\mathcal{C}}_1$}$$
with 
$$\tilde c (x) : = r_k^2 c(r_k x), \quad \tilde{\mathcal L}_0 w: = tr(A(r_k x)D^2 w)  , \quad \tilde {\mathcal{C}}_1 := r_k^{-1} \mathcal C_1.$$
Since $\tilde w - \tilde u$ solves $$\tilde{\mathcal L}_0 (\tilde w - \tilde u) = \tilde f, \quad \tilde f:=\tilde c \tilde u, \quad \text{in $\tilde {\mathcal{C}}_1$}$$ and vanishes on the boundary, by the maximum principle, 
\be\label{infinity} |\tilde w - \tilde u| \leq C \|\tilde f\|_{L^\infty} \leq C\|\tilde c\|_{L^\infty}\|\tilde u\|_{L^\infty} \quad \text{in $\tilde {\mathcal{C}}_1$}.\ee
Using the definition of $\tilde c$, and by applying the classical Carleson estimate for the $ \tilde{\mathcal L}_{\tilde c}$ operator in $\tilde {\mathcal{C}}_2$ (notice that $\tilde u$ is defined and solves the equation in the larger cylinder  $\tilde {\mathcal{C}}_2$),
$$\|\tilde u\|_{L^\infty(\tilde {\mathcal{C}}_1)}\leq C \tilde u \left(\frac 1 2 e_n\right),$$
 we get that
$$\|\tilde w - \tilde u\|_{L^\infty(\tilde {\mathcal{C}}_1)} \leq C r_k^2 \tilde u \left (\frac 1 2 e_n\right).$$
Thus, for $k$ sufficiently large, $$C r_k^2 \leq \frac 1 2,$$ which gives
$$\tilde w\left(\frac 1 2 e_n\right) \sim \tilde u\left (\frac 1 2 e_n\right),$$
from which the desired conclusion follows with $\beta=2$.\end{proof}

\medskip

We proceed with the statement and proof of the next claim.

\medskip

{\noindent {\it Claim 2}.} \begin{equation}\label{bound4}
\left |\frac{w_k}{w_{k+1}} - 1\right |\leq C r_k^\beta, \quad \text{in $\mathcal C_{r_{k+2}}.$}
\end{equation}

\begin{proof} The argument in the proof of {\it{Claim 1}} above gives that (unraveling the scaling)
$$w_k\left(r_k \frac 1 2 e_n\right) \sim u\left (r_ k\frac 1 2 e_n\right),$$
and 
$$w_{k+1}\left(r_{k+1} \frac 1 2 e_n\right) \sim u\left(r_ {k+1}\frac 1 2 e_n\right).$$
On the other hand, by Harnack inequality, $$ u\left(r_ k\frac 1 2 e_n\right) \sim  u\left(r_ {k+1}\frac 1 2 e_n\right),$$ hence
$$ w_k\left(r_ k\frac 1 2 e_n\right) \sim  w_{k+1}\left(r_ {k+1}\frac 1 2 e_n\right).$$ Combining this fact with 
\eqref{bound2} applied to $w_k$ and $w_{k+1}$ we obtain that
\begin{equation}\label{bound3}
|w_{k+1} - w_k| \leq Cr_k^\beta w_{k+1}\left(r_ {k+1}\frac 1 2 e_n\right), \quad \text{in $\mathcal C_{r_{k+1}}.$}
\end{equation}
Thus, the classical boundary Harnack inequality for the operator $\mathcal L_0$ and the pair of functions $w_{k+1}-w_k$ and $w_{k+1}$ gives the desired claim \eqref{bound4}.\end{proof}

We are now ready for the next step.

\medskip

{\noindent {\it Claim 3.}} There exists $a_k \sim 1$ and $\delta>0$ small universal, such that 
\begin{equation}\label{bound5} \left| \frac{a_k u_0}{w_k} - 1\right| \leq C r_k^{\delta} \quad \text{in $\mathcal{C}_{r_{k+1}.}$}
\end{equation}

\begin{proof}
From \eqref{bound4}, and $m=0, 1, \ldots, k-1,$
\be\label{D}\frac{w_{k-m-1}}{w_{k-m}} = 1+ O (r_{k-m}^\beta), \quad \text{in $ \mathcal C_{r_{k+1}}.$}\ee
On the other hand, by boundary Harnack applied to $w_{k-m-1}$ and $w_{k-m}$ in $\mathcal C_{r_{k-m+1}}$ we have that for $\alpha>0$ small universal, 
\be \label{bound6}
\frac{w_{k-m-1}}{w_{k-m}}= b_{k-m} (1+ O(r_m^\alpha))  \quad \text{in $\mathcal C_{r_{k+1}},$}
\ee
with 
\be\label{b} b_{k-m} = 1 + O(r_{k-m}^\beta), \quad  b_{k-m} \sim 1,
\ee
in view of \eqref{D}.
Similarly, 
\be \label{bound7}
\frac{u_0}{w_0}= b_0(1+ O(r_k^\alpha)) \quad \text{in $\mathcal C_{r_{k+1}},$}  \quad b_0 \sim 1.
\ee
Thus, using \eqref{bound2} for $m=0, \ldots, [\frac k 2]$ and \eqref{bound6}-\eqref{bound7} for the remaining terms, we get 
$$
\frac{u_0}{w_k} = \left(\prod \frac{w_{k-m+1}}{w_{k - m}}\right) \frac{u_0}{w_0}= \left (\prod_{i=0}^{[\frac k 2 ]} b_i \right)(1+O(r_k^{\alpha/2}))(1+O(r_k^{\beta/2})).
$$ Here we used that 
\be\label{log}\prod_{i} (1+ \xi_i) = e^{\sum_{i} \ln(1+\xi_i)}=e^{O(\sum_i |\xi_i|)}, \quad |\xi_i|\leq \frac 1 2.\ee
This gives the desired claim for an appropriate $\delta$ and 
$$a_k := \left (\prod_{i=0}^{[\frac k 2 ]} b_i \right).$$
Notice that $a_k \sim 1$ in view of \eqref{b} and \eqref{log}.
\end{proof}

We are now ready to conclude the proof.
Combining \eqref{bound2} with \eqref{bound5} and using the standard Carleson estimate for $w_k$, we get that
$$|a_k u_0 - u| \leq Cr_k^\delta  \ a_k u_0\left(r_k \frac{e_n}{2}\right) \quad \text{in $\mathcal C_{r_{k+1}}$}.$$

Set $$M_k:= a_k u_0\left(r_k \frac{e_n}{2}\right),$$ and 
$$\tilde u_0(x) : = \frac{a_k}{M_k} u_0(r_k x), \quad \tilde u(x): = \frac{1}{M_k} u(r_k x), $$
then the inequality above reads
\be |\tilde u_0 - \tilde u| \leq C r_k^\delta \quad \text{in $\tilde{\mathcal C}_{1/2},$} \quad \quad \tilde u_0\left(\frac 1 2 e_n\right)=1. \ee
Using H\"older estimates for the operator $\tilde {\mathcal L}_0$ applied to $\tilde u_0 - \tilde u$, we obtain that for $\alpha$ small universal, and $\rho$ sufficiently small universal, 
\be\label{0gamma} \|\tilde u_0 - \tilde u\|_{C^{0,\alpha}(B_{\rho}(\frac 1 4 e_n))} \leq  C(\|\tilde u_0 - \tilde u\|_{L^\infty(\tilde{\mathcal C}_{1/2})} + \|\tilde f\|_{L^\infty(\tilde{\mathcal C}_{1/2})}) \leq Cr_k^\delta. \ee
On the other hand, since by Harnack inequality $\tilde u_0$ is bounded below and has bounded $C^{0,\alpha}$ norm in $B_{\rho}(\frac 1 4 e_n)$, we conclude that
$$\left\|\frac{\tilde u}{\tilde u_0} -1\right\|_{C^{0,\alpha}(B_{\rho}(\frac 1 4 e_n))} \leq Cr_k^\alpha,$$ where without loss of generality we assumed that $\alpha \leq \delta.$
After rescaling, this gives that 
$$\left\|\frac{u}{ a_k u_0} -1\right\|_{C^{0,\alpha}(B_{\rho r_k}(\frac{r_k}{4} e_n))} \leq C$$
and since $a_k$ is bounded by universal constants above and below, we conclude that 
$$ \left\|\frac{u}{ u_0}\right\|_{C^{0,\alpha}(B_{\rho r_k}(\frac{r_k}{4} e_n))} \leq C.$$
This gives the desired result by a standard covering argument. 
\end{proof}

\section{Some variants}

In this section we extend Theorem \ref{main} to other classes of linear uniformly elliptic operators.

\subsection{Linear Operators with $\nabla u$ dependence.} Let 
$$\mathcal L_{{\bf b}, c} u := tr(A(x)D^2u) + {\bf b}(x) \cdot \nabla u + c(x)u,$$ with
$$\lambda I \leq A \leq \Lambda I, \quad {\bf b}, c \in L^\infty.$$ In this case, the arguments in the proof of Theorem \ref{main} continue to hold as long as we have interior gradient estimates. For this reason, in dimension $n \geq 3$, 
we assume further that $A(x)$ has small oscillation at small scales, i.e.
\be\label{osc} |A(x)-A(y)|\leq \eps_0, \quad \text{for $|x-y| \leq r_0$}\ee
for some $\eps_0=\eps_0(n, \lambda, \Lambda)$ and some $r_0>0.$ Given two different operators, $\mathcal L_{{\bf b},c}$ and $\mathcal L_{{\bf b'},c'}$, in this setting universal constants will depend possibly on $\|{\bf b}\|_{L^\infty}, \|{\bf b'}\|_{L^\infty}$ and $r_0$ as well. For clarity of exposition, we restate the BHP for this class of operators.
\begin{thm}\label{main22}
Let $A$ satisfy \eqref{osc} (if $n\geq 3$), and let $u,v$ solve
$$ \mathcal L_{{\bf b}, c} u = \mathcal L_{{\bf {b'}},c'} v = 0 \quad \text{in $\mathcal C_1$,}$$ 
vanish continuously on $\Gamma$, and $u>0$. 
Then
$$\frac{v}{u} \in C^{0,\alpha}(\mathcal C_{1/2}),$$ for some $0<\alpha<1$ universal and \begin{equation}\label{bound13}\left \|\frac v u  \right \|_{C^{0,\alpha}(\mathcal C_{1/2})} \leq C \frac{\|v\|_{L^\infty(\mathcal C_1)}}{u\left(\frac 1 2 e_n\right)},\end{equation} with $C>0$ universal. If $v>0$ then the ratio is bounded below in $\mathcal C_{1/2}$ and the right hand side in \eqref{bound13} can be replaced by $C \dfrac v u \left(\frac 12 e_n\right)$.
\end{thm}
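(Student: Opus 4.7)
The plan is to run the same three-claim iteration as in Theorem \ref{main}, with the same base operator $\mathcal{L}_0 w := tr(A(x) D^2 w)$ (no lower order terms) and the same auxiliary functions: a positive solution $u_0$ to $\mathcal{L}_0 u_0 = 0$ normalized by $u_0(e_n/2)=1$, and the sequence of $\mathcal{L}_0$-harmonic replacements $w_k$ that agree with $u$ on $\partial \mathcal{C}_{r_k}$. Since Claims 2 and 3, as well as the final passage to H\"older continuity via interior estimates for $\tilde{\mathcal{L}}_0$ applied to $\tilde u_0 - \tilde u$, rely only on boundary Harnack and Harnack's inequality for the unperturbed operator $\mathcal{L}_0$ and on the comparability $w_k(r_k e_n/2) \sim u(r_k e_n/2)$, they carry over verbatim once Claim 1 is re-established with some universal exponent $\beta>0$.

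The substantive modification is in Claim 1. After the rescaling $\tilde u(x) := u(r_k x)$, the function $\tilde u$ satisfies
\[
\tilde{\mathcal{L}}_0 \tilde u + r_k \tilde{\mathbf{b}} \cdot \nabla \tilde u + r_k^2 \tilde c \tilde u = 0 \quad \text{in } \tilde{\mathcal{C}}_1,
\]
with $\tilde{\mathbf{b}}(x) := \mathbf{b}(r_k x)$, $\tilde c(x) := c(r_k x)$, so that the difference $\tilde w - \tilde u$ solves $\tilde{\mathcal{L}}_0(\tilde w - \tilde u) = \tilde f$ with $\tilde f = r_k \tilde{\mathbf{b}} \cdot \nabla \tilde u + r_k^2 \tilde c \tilde u$. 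The zero-order contribution is controlled exactly as before via Carleson. To handle the new gradient term, I use interior gradient estimates for $\tilde u$: under the small-oscillation hypothesis \eqref{osc} on $A$ when $n \geq 3$ (and automatically when $n=2$), Caffarelli's interior $W^{2,p}$ theorem yields $|\nabla \tilde u(x)| \leq C d(x,\Gamma)^{-1} \sup_{B_{d(x,\Gamma)/2}(x)} \tilde u$, and Carleson on $\tilde u$ turns the right-hand side into a universally controlled multiple of $\tilde u(e_n/2)$. Feeding this into an $L^p$ version of the ABP estimate with $p>n$ then gives $\|\tilde w - \tilde u\|_{L^\infty} \leq C r_k^\beta \tilde u(e_n/2)$ for some universal $\beta > 0$ (smaller than the exponent $\beta=2$ of Theorem \ref{main}, reflecting the weaker decay of the first-order term after rescaling).

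The main obstacle is precisely this ABP step, because $\nabla \tilde u$ can genuinely blow up approaching the Lipschitz boundary $\Gamma$, so the interior gradient estimate does not yield $\tilde f \in L^\infty$. One must combine the interior bound on $|\nabla \tilde u|$ with the boundary H\"older decay of $\tilde u$ to place $\tilde f$ in a suitable $L^p$ space, which is where the small-oscillation hypothesis on $A$ is indispensable in $n \geq 3$: Krylov--Safonov alone does not deliver the $W^{2,p}$ regularity needed to even interpret $\nabla \tilde u$ pointwise. Once Claim 1 is secured with the new exponent, the telescoping-product definition of $a_k \sim 1$ in Claim 3 and the final application of $\tilde{\mathcal{L}}_0$ interior H\"older estimates to $\tilde u_0 - \tilde u$ go through unchanged, and a standard covering argument then yields \eqref{bound13}.
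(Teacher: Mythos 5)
Your overall architecture is exactly the paper's: keep the unperturbed operator $\mathcal L_{{\bf 0},0}w=tr(A(x)D^2w)$, re-prove the key perturbation estimate (Claim 1 / \eqref{infinity} and \eqref{0gamma}), and observe that Claims 2 and 3 and the covering argument go through unchanged. The gap is in how you close the perturbation estimate. You propose to bound $|\nabla\tilde u(x)|\leq C\,d(x,\Gamma)^{-1}\sup_{B_{d/2}(x)}\tilde u$ by interior gradient estimates and then feed $\tilde f=r_k\tilde{\bf b}\cdot\nabla\tilde u+r_k^2\tilde c\,\tilde u$ into an $L^p$ ABP estimate with $p>n$. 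But the best pointwise bound this yields near the boundary is $|\nabla\tilde u|\leq C\,d^{\eta-1}$, where $\eta$ is the boundary H\"older exponent in a Lipschitz domain, which is small universal (and degenerates as $L$ grows). Since $\int_{\mathcal C_1} d^{p(\eta-1)}\,dx<\infty$ only for $p<\frac{1}{1-\eta}$, the right-hand side $\tilde f$ fails to be in $L^n$ (let alone $L^p$ with $p>n$) whenever $\eta\leq 1-\frac1n$, which is the generic situation. The ABP/maximum principle genuinely requires $f\in L^n$ to produce an $L^\infty$ bound, so this step does not close, and you cannot repair it by choosing $p$ smaller.

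The paper's resolution is to avoid any global estimate with the singular right-hand side by splitting the domain. Set $D':=\{x\in\mathcal C_1:\ dist(x,\partial\mathcal C_1)>r^\delta\}$ for a small $\delta>0$. In the thin collar $\mathcal C_1\setminus D'$ one does not use the equation at all: since $u=w$ on $\partial\mathcal C_1$ and both $u$ and $w$ are uniformly $C^{0,\eta}$ up to the boundary (Carleson plus boundary H\"older estimates), one gets $|u-w|\leq Cr^{\delta\eta}$ there. In $D'$ every point is at distance at least $r^\delta$ from the boundary, so the interior gradient estimate (Cordes--Nirenberg under \eqref{osc} when $n\geq3$) gives the \emph{bounded} estimate $|\nabla u|\leq Cr^{-\delta}$, hence $|f|\leq C(r\cdot r^{-\delta}+r^2)\leq Cr^{1-\delta}$ in $L^\infty(D')$; the plain maximum principle in $D'$, with boundary data on $\partial D'$ already controlled by $Cr^{\delta\eta}$, then yields $|u-w|\leq C(r^{\delta\eta}+r^{1-\delta})$, i.e.\ \eqref{inf22} with $\beta=\min(\delta\eta,1-\delta)$. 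The interior H\"older estimate near $\frac14 e_n$ (\eqref{0gamma22}) follows as you say. So your diagnosis of where the difficulty lies is correct, and your identification of the role of \eqref{osc} is correct, but the $L^p$-ABP mechanism you propose to overcome it does not work; the domain-splitting argument is needed.
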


\begin{rem} In Theorem \ref{main22}, if $b=b'$, then the assumption \eqref{osc} is no longer necessary as the proof of Theorem \ref{main} can be applied directly in this context.
\end{rem}
\begin{proof} In order to apply the strategy in Theorem \ref{main}, it suffices to verify that \eqref{infinity} and \eqref{0gamma} continue to hold in this setting, with $w$ the replacement of $u$ for the unperturbed operator. 
As before, we treat the lower order term ${\bf b} \cdot \nabla u + c u$ as a perturbation. However, since gradient estimates do not hold up to the Lipschitz boundary, we need to combine H\"older boundary estimates with interior gradient estimates. Below are the details.
After a dilation and multiplication by a constant we may assume that 
\be\label{res} |A(x)- I| \leq \eps_0(n), \quad |{\bf b}(x)| \leq C r, \quad |c(x)| \leq C r^2,\ee for some $r$ sufficiently small, 
$$ \mathcal L_{{\bf b}, c}u=0,  \quad u> 0 \quad \text{in $\mathcal C_2$}, \quad u\left(\frac  1 2 e_n\right)=1.
$$ By Carleson estimates we obtain that $u$ is bounded in $L^\infty$ in $\mathcal C_{3/2}$ by a universal constant. Then, H\"older estimates up to the boundary applied in $\mathcal C_{3/2}$ give that \be\label{eta1} \|u\|_{C^{0,\eta}(\mathcal C_1)} \leq C\ee for some $\eta>0$ small and $C>0$ universal.
Let $w$ be the replacement of $u$ corresponding to the unperturbed operator 
$$\mathcal L_{{\bf 0},0} w=0 \quad \text{in $\mathcal C_1$} \quad w=u \quad \text{on $\p \mathcal C_1$.}$$
The arguments of the proof of Theorem \ref{main} remain valid if we show that for $\beta>0$ small universal,
\be\label{inf22} \|u-w\|_{L^{\infty}(\mathcal C_{1})} \leq r^{\beta}\ee
and for some $\alpha, \rho>0$ small universal 
\be\label{0gamma22} \|u-w\|_{C^{0,\alpha}(B_\rho(\frac 1 4 e_n))} \leq r^{\beta}.\ee
By H\"older estimates up to the boundary, in view of \eqref{eta1} (up to possibly renaming $\eta$),
\be\label{eta12}  \|w\|_{C^{0,\eta}(\mathcal C_1)}  \leq C.\ee
Set,
$$D' := \{x \in \mathcal C_1 \ : \ dist(x, \p \mathcal C_1) > r^\delta\}$$
for some $\delta>0$ small.
Using that $u=w$ on the boundary of $\mathcal C_1$, and $\|u-w\|_{C^{0,\eta}(\mathcal C_1)} \leq C$ (by \eqref{eta1}-\eqref{eta12}), we ge that
\be \|u-w\|_{L^\infty} \leq C r^{\delta \eta} \quad \text{in $\mathcal C_1 \setminus D'.$}\ee
On the other hand, 
\be\label{00}\mathcal L_{{\bf 0}, 0} (u-w) = f \quad \text{in $D'$}\ee
with (see \eqref{res})
$$|f| = |{\bf b} \cdot \nabla u + c u| \leq C(r \|\nabla u\|_{L^\infty(D')} + r^2\|u\|_{L^\infty(D')}).$$
Since at each point $x \in D'$, $B_{r^\delta}(x) \subset \mathcal C_1$, by interior gradient estimates (Cordes-Nirenberg if $n \geq 3$) for the equation satisfied by $u$, we find that
$$ |\nabla u(x)| \leq C r^{-\delta}\|u\|_{L^\infty(\mathcal C_1)} \leq C r^{-\delta}, \quad \text{in $D'$.}$$
In conclusion,
$$|f| \leq C r^{1-\delta}, \quad \text{in $D'$.}$$
We deduce that
$$ |u-w| \leq C (r^{\delta \eta} + r^{1-\delta}) \quad \text{in $D'$,} $$ from which \eqref{inf22} follows.
Moreover, by the interior H\"older estimates for \eqref{00}, we also obtain \eqref{0gamma22}.
\end{proof}

\begin{rem} Theorem \ref{main22} continues to hold if the lower order coefficients, ${\bf b}, c$ satisfy 
$$|{\bf b}| \leq C dist(x, \Gamma)^{\delta -1}, \quad |c| \leq C dist(x, \Gamma)^{\delta -2}$$ for some $\delta>0$ small. The reason for that is that the coefficients of operator $\mathcal L_{{\bf b}, c}$ are improving after dilation, and the boundary H\"older estimates are still valid under this growth assumption.
\end{rem}

\subsection{Linear Operators with second order H\"older coefficients.} Let 
$$\mathcal L_{A,{\bf b}, c} u := tr(A(x)D^2u) + {\bf b}(x) \cdot \nabla u + c(x)u,$$ with
$$\lambda I \leq A \leq \Lambda I, \quad A \in C^{0,\gamma}, \quad {\bf b}, c \in L^\infty.$$

In this setting, universal constants associated to different operators, $\mathcal L_{A,{\bf b}, c}$ and $\mathcal L_{A',{\bf b'}, c'},$ will depend also on $[A]_{C^{0,\gamma}}, [A']_{C^{0,\gamma}}.$ The BHP takes the following form.

\begin{thm}\label{main3}
Let $u,v$ solve
$$\mathcal L_{A,{\bf b}, c} u = \mathcal L_{A',{\bf {b'}},c'} v = 0 \quad \text{in $\mathcal C_1$}, $$ vanish continuously on $\Gamma$, and $u>0.$ Assume that
$A(0)=A'(0).$ Then $\dfrac v u$ is $C^{0,\alpha}$ in the non-tangential cone $$\mathcal T:=\{x_n \geq 2L|x'|\} \cap \mathcal C_{1/2},$$ for some $0<\alpha< 1$ universal and 
\be\label{bound144}\left\|\frac v u\right\|_{C^{0,\alpha}(\mathcal T)} \leq  C \frac{\|v\|_{L^\infty}(\mathcal C_1)}{u(\frac 1 2 e_n)},\ee
with $C>0$  universal. If $v>0$ then the ratio is bounded below and the right hand side in \eqref{bound144} can be replaced by $C \dfrac v u \left(\frac 12 e_n\right)$.\end{thm}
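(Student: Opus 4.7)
My plan is to adapt the three-step scheme from the proof of Theorem \ref{main}, replacing the auxiliary operator that shared the variable principal part with the \emph{frozen, constant-coefficient} operator
\[
\mathcal{L}_0 w := tr(A_0 D^2 w), \qquad A_0 := A(0) = A'(0).
\]
Since $\mathcal{L}_0$ is uniformly elliptic with constant coefficients, the classical BHP, Harnack, and Carleson estimates hold for it. Let $u_0$ solve $\mathcal{L}_0 u_0 = 0$ in $\mathcal C_1$, vanish on $\Gamma$, and be normalized by $u_0(\tfrac{1}{2} e_n)=1$. As in Theorem \ref{main}, the decomposition $v = V^+ - V^-$ reduces matters to proving that $u/u_0 \in C^{0,\alpha}(\mathcal T)$ and is bounded below on $\mathcal T$ by a universal positive constant.

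The only genuinely new estimate is the analog of Claim 1. Let $w_k$ solve $\mathcal{L}_0 w_k = 0$ in $\mathcal C_{r_k}$ with $w_k = u$ on $\partial \mathcal C_{r_k}$; after dilating by $r_k$, the rescaled $\tilde u$ satisfies
\[
\mathcal{L}_0 \tilde u = \tilde f, \qquad \tilde f := -tr\bigl((\tilde A - A_0)D^2\tilde u\bigr) + \tilde{\mathbf b}\cdot \nabla \tilde u + \tilde c\,\tilde u,
\]
where, since $A\in C^{0,\gamma}$ and $A(0)=A_0$,
\[
|\tilde A - A_0|\leq C r_k^\gamma \text{ in }\tilde{\mathcal C}_1,\qquad |\tilde{\mathbf b}|\leq Cr_k,\qquad |\tilde c|\leq Cr_k^2.
\]
To bound $\tilde u - \tilde w$ in $L^\infty$ I would adopt the two-region trick from the proof of Theorem \ref{main22}: in the boundary layer $\{dist(\cdot,\partial\tilde{\mathcal C}_1)<r_k^\delta\}$, use boundary H\"older continuity of $\tilde u$ and $\tilde w$ together with $\tilde u=\tilde w$ on $\partial\tilde{\mathcal C}_1$ to obtain $|\tilde u - \tilde w|\leq C r_k^{\delta\eta}$; on the complementary interior region, use interior Schauder $C^{2,\alpha'}$ estimates for $\tilde u$ (available precisely because $A\in C^{0,\gamma}$) to get $|D^2\tilde u|\leq Cr_k^{-2\delta}$, hence $|\tilde f|\leq Cr_k^{\gamma-2\delta}$ and $|\tilde u-\tilde w|\leq Cr_k^{\gamma-2\delta}$ there. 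Choosing $\delta$ small and universal yields an estimate of the form \eqref{bound2} with some universal $\beta>0$. Claims 2 and 3 then transfer essentially verbatim, as they only invoke classical BHP, Harnack, and Carleson estimates for the constant-coefficient operator $\mathcal{L}_0$.

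The main obstacle, and the source of the non-tangential restriction, is structural: the bound $|A(x)-A_0|\leq C|x|^\gamma$ driving the perturbation argument is only effective near the origin. One cannot re-center the scaling at another point $x_0\in\Gamma\setminus\{0\}$, because that would require $A(x_0)=A'(x_0)$, which the hypothesis does not provide. Consequently the axis ball estimate from Claim 3 is produced only on the shrinking family $B_{\rho r_k}(\tfrac{r_k}{4} e_n)$. To obtain the $C^{0,\alpha}$ bound on the full non-tangential cone $\mathcal T$, I would perform a Whitney-type covering of $\mathcal T$ by interior balls of radius comparable to the distance to $0$; each such ball lies at a definite fraction of its own radius from $\Gamma$ by the non-tangential cone condition, so interior Harnack chains and interior $C^{0,\alpha}$ estimates for $\mathcal{L}_0$ applied to $u/u_0 - a_k$ propagate the axis estimate to the entire shell $\mathcal T\cap(\mathcal C_{r_k}\setminus\mathcal C_{r_{k+1}})$. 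Assembling these and pairing with the analogous estimate for $V^\pm$ via $v/u=(V^+/u_0 - V^-/u_0)\cdot(u_0/u)$ produces \eqref{bound144}.
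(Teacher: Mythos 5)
Your overall architecture matches the paper's: freeze the principal part at the origin (the paper normalizes $A(0)=A'(0)=I$ and uses the harmonic replacement), run the three claims of Theorem \ref{main} against the constant-coefficient comparison functions, accept that the scaling cannot be re-centered at other boundary points, and recover the estimate on the non-tangential cone by an interior covering/Harnack-chain argument. The restriction to $\mathcal T$ is diagnosed for exactly the right reason.

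There is, however, a genuine gap in the one step you identify as new. You write the difference equation as $\mathcal L_0(\tilde u-\tilde w)=\tilde f$ with $\tilde f$ containing $tr((\tilde A-A_0)D^2\tilde u)$, and you propose to bound $|D^2\tilde u|$ by ``interior Schauder $C^{2,\alpha'}$ estimates for $\tilde u$ (available precisely because $A\in C^{0,\gamma}$).'' This is not available: Schauder theory requires \emph{all} coefficients to be H\"older continuous, whereas here ${\bf b},c$ are merely in $L^\infty$. A solution of $tr(AD^2u)=-{\bf b}\cdot\nabla u-cu$ with bounded measurable lower-order coefficients is $C^{1,\alpha}$ and $W^{2,p}_{loc}$ for every $p<\infty$, but $D^2u$ need not be bounded, so the pointwise bound $|D^2\tilde u|\leq Cr_k^{-2\delta}$ on $D'$ has no justification. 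The paper sidesteps this entirely by putting the perturbation on the \emph{replacement} rather than on $u$: it writes $\mathcal L_{A,{\bf b},c}(u-w)=f$ with $f=-\bigl(tr((A-I)D^2w)+{\bf b}\cdot\nabla w+cw\bigr)$, where $w$ is harmonic, so $\|D^2w\|_{L^\infty(D')}\leq Cr^{-2\delta}$ follows from interior derivative estimates for harmonic functions, and the maximum principle for the variable-coefficient operator closes the estimate. Your version could alternatively be repaired by replacing the pointwise bound with interior $W^{2,n}$ (Calder\'on--Zygmund) estimates on a cover of $D'$ by balls of radius $r^\delta$ together with the $L^n$-form of the Aleksandrov--Bakelman--Pucci estimate, but as written the step fails. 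The rest of the argument (boundary-layer H\"older comparison, Claims 2 and 3, the covering of $\mathcal T$) is sound and consistent with the paper.
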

\begin{proof} The proof follows almost verbatim as in Theorem \ref{main22}, up to controlling the right hand side $f$ of the equation satisfied by $u$ minus its harmonic replacement $w$. Thus, after a dilation and multiplication by a constant 
\be |A(x)- I| \leq Cr^\gamma, \quad |{\bf b}(x)| \leq C r, \quad |c(x)| \leq C r^2,\ee for some $r$ sufficiently small, 
$$ \mathcal L_{A,{\bf b}, c}u=0,  \quad u> 0 \quad \text{in $\mathcal C_2$}, \quad u\left(\frac  1 2 e_n\right)=1,
$$ and for some $\eta>0$ small universal,
\be\label{ET} \|u\|_{C^{0,\eta}(\mathcal C_1)} \leq C. \ee 
Let $w$ be the harmonic replacement of $u$ $$ \Delta w=0 \quad \text{in $\mathcal C_1,$} \quad
 w=u \quad \text{on $\p \mathcal C_1$.}$$ Then, by H\"older estimates up to the boundary, in view of \eqref{ET}, 
\be\label{TT} \|u - w\|_{C^{0,\eta}(\mathcal C_1)}  \leq C.\ee
As before, it suffices to show that \eqref{inf22} and \eqref{0gamma22} hold for $\alpha, \beta, \rho>0$ small universal. Set,
$$D' := \{x \in B_1 \cap D \ : \ dist(x, \p \mathcal C_1) > r^\delta\}$$
for some $\delta>0$ small.
Since $u=w$ on the boundary of $\mathcal C_1$ and \eqref{TT} holds, we ge that
$$ \|u-w\|_{L^\infty} \leq C r^{\delta \eta} \quad \text{in $\mathcal C_1 \setminus D'.$}$$
On the other hand, 
$$\mathcal L_{A,{\bf b}, c} (u-w) = f \quad \text{in $D'$}$$
with
\begin{align*}|f| &= |tr((A-I)D^2 w)+{\bf b} \cdot \nabla w + c w|\\
\ & \leq C(r^\gamma \|D^2w\|_{L^\infty(D')} + r \|\nabla w\|_{L^\infty(D')} + r^2\|w\|_{L^\infty(D')}).\end{align*}
Hence, since $w$ is harmonic and
$$\|D^2w\|_{L^\infty(D')} \leq C r^{-2\delta},$$
we get
$$|f| \leq C r^{\gamma -2 \delta} $$
and we conclude as in the previous proof.
\end{proof}

\begin{cor}\label{coro} Under the assumptions of Theorem \ref{main3}, if $$A=A' \quad \text{on $\Gamma$}$$ then the conclusion holds in the full domain $\mathcal C_{1/2}$ as for Theorem \ref{main22}.
\end{cor}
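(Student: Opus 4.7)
The plan is to localize Theorem~\ref{main3} at every boundary point of $\Gamma$ and patch the resulting non-tangential estimates together by a covering argument.

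Fix any $x_0 \in \Gamma \cap \{|x'| \leq 3/4\}$. After translating the origin to $x_0$, which preserves the Lipschitz constant $L$ and leaves the operators $\mathcal L_{A,{\bf b},c}$ and $\mathcal L_{A',{\bf b'},c'}$ of the same form, the hypothesis $A(x_0)=A'(x_0)$ of Theorem~\ref{main3} is automatic since $x_0 \in \Gamma$ and $A=A'$ on $\Gamma$. Applying Theorem~\ref{main3} on a subcylinder $\mathcal C_\rho(x_0) \subset \mathcal C_1$ of some universal radius $\rho > 0$ yields
$$\left\| \frac{v}{u} \right\|_{C^{0,\alpha}(\mathcal T_{x_0})} \leq C\, \frac{\|v\|_{L^\infty(\mathcal C_\rho(x_0))}}{u\left(x_0+\frac{\rho}{2}e_n\right)},$$
where $\mathcal T_{x_0} := \{x_n - g(x_0') \geq 2L|x'-x_0'|\} \cap \mathcal C_{\rho/2}(x_0)$ is the non-tangential cone anchored at $x_0$. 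A standard boundary Carleson estimate for $v$ in $\mathcal C_1$ together with a non-tangential Harnack chain for $u$ connecting $x_0 + \frac{\rho}{2}e_n$ to $\frac{1}{2}e_n$ allow me to replace the right-hand side by $C\|v\|_{L^\infty(\mathcal C_1)}/u(\frac{1}{2}e_n)$, still with $C$ universal.

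Next, I claim the family $\{\mathcal T_{x_0}\}_{x_0 \in \Gamma \cap \{|x'|<3/4\}}$ covers $\mathcal C_{1/2}$ up to an interior region. Given any $p = (p',p_n) \in \mathcal C_{1/2}$ with $p_n - g(p') \leq \rho/4$, the choice $x_0 := (p',g(p'))$ places $p$ on the vertical axis of the translated cone $\mathcal T_{x_0}$, so $p$ lies comfortably inside it with a definite non-tangential margin. Points whose distance to $\Gamma$ exceeds $\rho/4$ are handled by interior regularity: there $u$ is bounded below by Carleson and Harnack, the quotient $v/u$ solves a uniformly elliptic equation with bounded coefficients, and classical interior H\"older estimates apply on balls of universal radius. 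A Vitali-type patching then glues the piecewise $C^{0,\alpha}$ estimates into a single global $C^{0,\alpha}$ bound on $\mathcal C_{1/2}$ of the desired form.

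The main technical nuisance is the change-of-base-point bookkeeping: the denominator produced by Theorem~\ref{main3} applied at $x_0$ lives at a reference point attached to $x_0$, and has to be traded for the global reference value $u(\frac{1}{2}e_n)$. This step uses only the Carleson estimate and the interior Harnack principle for $\mathcal L_{A,{\bf b},c}$ and $\mathcal L_{A',{\bf b'},c'}$ in Lipschitz domains, tools already invoked repeatedly in the proofs of Theorems~\ref{main} and~\ref{main3}, so no new ingredient is required and the corollary reduces to the covering argument outlined above.
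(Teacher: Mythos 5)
Your argument is correct and is exactly the intended one: the paper states this corollary without proof, as the immediate consequence of applying Theorem \ref{main3} at every point of $\Gamma$ (where the hypothesis $A(x_0)=A'(x_0)$ now holds automatically because $A=A'$ on $\Gamma$) and combining the resulting non-tangential estimates with interior estimates via the same ``standard covering argument'' already invoked at the end of the proof of Theorem \ref{main}. One minor correction: in the interior region the quotient $v/u$ does not solve a uniformly elliptic equation with bounded coefficients (the two operators differ, and even for a single operator the quotient satisfies only a degenerate equation weighted by $u^2$); instead, use interior H\"older estimates for $u$ and $v$ separately together with the Harnack lower bound for $u$ there, which gives the same conclusion.
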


\subsection{Linear operators in divergence form.}  Let 
$$L_{{\bf b}, c} u := div(A(x)\nabla u) + {\bf b}(x) \cdot \nabla u + c(x)u,$$ with
$$\lambda I \leq A \leq \Lambda I, \quad {\bf b} \in L^p, c \in L^{\frac p 2}, \quad p>n.$$ 
In this setting, universal constants associated to different operators, $L_{{\bf b}, c}$ and $L_{{\bf b'}, c'},$ will depend on $n, \lambda, \Lambda, L, \|b\|_{L^p}, \|b'\|_{L^p}, \|c\|_{L^{p/2}}, \|c'\|_{L^{p/2}}$.

\begin{thm}\label{main23}
Let $u,v$ solve
$$ L_{{\bf b}, c} u = L_{{\bf {b'}},c'} v = 0 \quad \text{in $\mathcal C_1$,}$$ 
vanish continuously on $\Gamma$, and $u>0$. 
Then the conclusion of Theorem \ref{main22} holds.
\end{thm}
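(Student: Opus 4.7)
The plan is to follow the same scheme as in Theorem \ref{main22}: set $L_0 u := \operatorname{div}(A(x)\nabla u)$, treat the lower order terms ${\bf b}\cdot\nabla u + cu$ as a perturbation, and compare $u$ to its $L_0$-harmonic replacement $w$ on dyadic cylinders. The classical BHP for $L_0$ on Lipschitz domains (Caffarelli--Fabes--Mortola--Salsa) together with the Carleson estimate for $L_{{\bf b},c}$ (which extends to this Lebesgue class of lower-order coefficients) gives all the ingredients needed to replay Claims 1--3 verbatim, once the two local perturbation estimates \eqref{inf22} and \eqref{0gamma22} are re-established in the divergence setting.

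After the dilation $u(rx)$, the rescaled lower-order coefficients obey
\be\label{scaledivv}\|\tilde{\bf b}\|_{L^p(\tilde{\mathcal{C}}_2)} \leq C r^{1-n/p},\qquad \|\tilde c\|_{L^{p/2}(\tilde{\mathcal{C}}_2)} \leq Cr^{2-2n/p},\ee
both small positive powers of $r$ since $p>n$. Carleson together with boundary De Giorgi--Nash--Moser regularity for $L_{{\bf b},c}$ on Lipschitz domains yields $\|\tilde u\|_{C^{0,\eta}(\tilde{\mathcal{C}}_1)}\leq C$ for some universal $\eta>0$, and the same boundary estimate applies to the replacement $\tilde w$ solving $L_0 \tilde w=0$ in $\tilde{\mathcal{C}}_1$ with $\tilde w=\tilde u$ on $\partial \tilde{\mathcal{C}}_1$. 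Consequently, $|\tilde u-\tilde w|\leq Cr^{\delta\eta}$ on the boundary strip $\tilde{\mathcal{C}}_1\setminus D'$, where $D' := \{x\in\tilde{\mathcal{C}}_1 : \operatorname{dist}(x,\partial\tilde{\mathcal{C}}_1)>r^\delta\}$ and $\delta>0$ is chosen small.

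In $D'$, the difference satisfies weakly
$$L_0(\tilde u-\tilde w) = -\tilde{\bf b}\cdot\nabla\tilde u - \tilde c\,\tilde u.$$
The key point, and the main obstacle relative to Theorem \ref{main22}, is that no pointwise gradient bound is available for divergence-form solutions. Instead I would invoke Meyers' higher integrability (applied at interior scale $r^\delta$ to $\tilde u$, which solves a divergence equation with coefficients satisfying \eqref{scaledivv}) to obtain $\|\nabla\tilde u\|_{L^{\bar p}(D')}\leq Cr^{-N\delta}$ for some $\bar p>n$ and a universal $N$. Combining this with the Stampacchia $L^\infty$-estimate for divergence equations with zero boundary data on a Lipschitz cap of $D'$, the interior contribution is bounded by $C\bigl(\|\tilde{\bf b}\|_{L^p}\|\nabla\tilde u\|_{L^{\bar p}(D')} + \|\tilde c\|_{L^{p/2}}\|\tilde u\|_{L^\infty}\bigr)$. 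Summing the boundary and interior contributions and choosing $\delta$ small enough so all exponents of $r$ remain positive yields \eqref{inf22} with a universal $\beta>0$. The analog of \eqref{0gamma22} then follows from the interior De Giorgi--Nash--Moser Hölder estimate for $L_0$ with right-hand side in an appropriate Lebesgue space, applied on $B_\rho(\tfrac 14 e_n)\subset D'$. Once both estimates are in place, the dyadic iteration of Section 2 produces the $C^{0,\alpha}$ bound \eqref{bound13}, and the reduction $v=V^+-V^-$ with $V^\pm$ solving $L_{{\bf b'},c'}V^\pm=0$ handles the general $v$, exactly as in the proof of Theorem \ref{main}.
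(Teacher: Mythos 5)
Your overall scheme (perturbative comparison of $u$ with its replacement $w$ for the unperturbed divergence-form operator, then the dyadic iteration of Section 2) matches the paper's, but the interior step contains a genuine gap. You invoke Meyers' higher integrability to obtain $\|\nabla\tilde u\|_{L^{\bar p}(D')}\leq Cr^{-N\delta}$ with $\bar p>n$. For divergence-form operators with merely bounded measurable coefficients, Meyers only yields $\nabla\tilde u\in L^{2+\epsilon}$ locally, with $\epsilon>0$ small depending on $n,\lambda,\Lambda$; an exponent above $n$ is not available when $n\geq 3$ (that would require, e.g., VMO or continuous $A$). With only $\nabla\tilde u\in L^{2+\epsilon}$, the right-hand side $f=\tilde{\bf b}\cdot\nabla\tilde u+\tilde c\,\tilde u$ lies merely in $L^q$ with $\frac1q=\frac1p+\frac1{2+\epsilon}$, which in general violates the threshold $q>\frac n2$ needed for the Stampacchia $L^\infty$ bound (already for $n=3$ one can have $\frac1p+\frac12>\frac23$, and for $n\geq4$ it always fails). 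So the interior contribution to \eqref{inf22} is not controlled by your argument.

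The paper closes this step by a different device, which uses no gradient information beyond the Caccioppoli bound $\|\nabla u\|_{L^2(\mathcal C_1)}\leq C$: test the equation $L_{{\bf 0},0}(u-w)=f$ with $u-w$ itself (which vanishes on all of $\partial\mathcal C_1$, so no splitting into a boundary strip and $D'$ is needed), estimate $\int_{\mathcal C_1} f(w-u)\leq\|f\|_{L^q}\|w-u\|_{L^{q'}}$ with $\frac1q=\frac12+\frac1p$, and use that $q'<2^*$ (precisely because $p>n$) together with Sobolev embedding to absorb $\|w-u\|_{L^{q'}}$ into the energy, yielding $\|u-w\|_{H^1(\mathcal C_1)}\leq Cr^\delta$. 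This $H^1$ (hence $L^2$) smallness is then interpolated against the uniform bound $\|u-w\|_{C^{0,\eta}(\mathcal C_1)}\leq C$ to produce the $C^{0,\alpha}$ and $L^\infty$ smallness required by \eqref{inf22} and \eqref{0gamma22}. This energy-plus-interpolation argument is the essential new ingredient of the divergence case, and it is what your proposal is missing; the rest of your outline (scaling of the coefficients, reduction to the two key estimates, the decomposition $v=V^+-V^-$) is consistent with the paper.
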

\begin{proof} The proof is the same as the proof of Theorem \ref{main22}, but we need to use energy methods to estimate $f$.
As usual, we may assume that 
\be\label{resc} \|{\bf b}\|_{L^p(\mathcal C_2)} \leq C r^\delta, \quad \|c\|_{L^{p/2}(\mathcal C_2)} \leq C r^\delta,\ee for some $r$ sufficiently small, and an appropriate $\delta$ universal,
$$ L_{{\bf b}, c}u=0,  \quad u> 0 \quad \text{in $\mathcal C_2$}, \quad u\left(\frac  1 2 e_n\right)=1,$$ with
\be\label{eta111} \|u\|_{C^{0,\eta}} \leq C \quad \text{on $\mathcal C_1$,}\ee for some $\eta>0$ small and $C>0$ universal.
Let $w$ be the replacement of $u$ corresponding to the unperturbed operator 
$$L_{{\bf 0},0} w=0 \quad \text{in $\mathcal C_1$,} \quad w=u \quad \text{on $\p \mathcal C_1$.}$$
By H\"older estimates up to the boundary, in view of \eqref{eta111}, 
\be\label{eta20}  \|u-w\|_{C^{0,\eta}(\mathcal C_1)}  \leq C.\ee
As before, it suffices to show that \eqref{inf22} and \eqref{0gamma22} hold for $\alpha, \beta, \rho>0$ small universal. We will prove the stronger claim that
\be\label{conclusion} \|u-w\|_{C^{0,\alpha}(\mathcal C_{1})} \leq r^{\beta}.\ee
We have, 
\be\label{000} L_{{\bf 0}, 0} (u-w) = f \quad \text{in $\mathcal C_1$}\ee
with $u-w=0$ on the boundary and 
$$f = -({\bf b} \cdot \nabla u + c u).$$ Since by Caccioppoli inequality and Carleson estimates,  $$\|\nabla u\|_{L^2(\mathcal C_1)}, \|u\|_{L^\infty(\mathcal C_1)} \leq C,$$
H\"older's inequality gives that $f \in L^q(\mathcal C_1)$ with $\frac 1 q = \frac 1 2 + \frac 1 p,$ and by \eqref{resc}
$$\|f\|_{L^q(\mathcal C_1)} \leq C r^\delta.$$
Multiplying \eqref{000} by $u-w$, integrating by parts, and using ellipticity, we get that,
$$\int_{\mathcal C_1} |\nabla (u-w)|^2 dx \leq C\int_{\mathcal C_1} f (w-u) dx \leq C\|f\|_{L^q(\mathcal C_1)} \|w-u\|_{L^{q'}(\mathcal C_1)}.$$ By Sobolev embedding, since $q' < 2^*$ ($p>n$), we conclude that
$$\|w-u\|_{H^1(\mathcal C_1)} \leq C r^\delta.$$
Now, the desired conclusion \eqref{conclusion} follows by interpolating the inequality above and \eqref{eta20}.
\end{proof}
Finally, let 
$$L_{A,{\bf b}, c} u := tr(A(x)D^2u) + {\bf b}(x) \cdot \nabla u + c(x)u,$$ with
$$\lambda I \leq A \leq \Lambda I, \quad A \in C^{0,\gamma}, \quad {\bf b} \in L^p, c \in L^{\frac p 2}, \quad p>n.$$

In this setting, universal constants associated to different operators, $L_{A,{\bf b}, c}$ and $L_{A',{\bf b'}, c'},$ will depend also on $[A]_{C^{0,\gamma}}, [A']_{C^{0,\gamma}}.$ The BHP takes the following form. 

\begin{thm}\label{main33}
Let $u,v$ solve
$$L_{A,{\bf b}, c} u = L_{A',{\bf {b'}},c'} v = 0 \quad \text{in $\mathcal C_1$}, $$ vanish continuously on $\Gamma$, and $u>0.$ Assume that
$A(0)=A'(0).$ Then the conclusion of Theorem \ref{main3} holds. Moreover, if $A=A'$ on $\Gamma$, the conclusion of Theorem \ref{main22} holds.\end{thm}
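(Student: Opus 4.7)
The plan is to combine the H\"older-freezing scheme of Theorem \ref{main3} with the energy method of Theorem \ref{main23}. After the usual dilation and the linear change of variables sending $A(0)$ to the identity, we may assume
\[
|A(x)-I|\le Cr^\gamma,\quad \|{\bf b}\|_{L^p(\mathcal C_2)}\le Cr^\delta,\quad \|c\|_{L^{p/2}(\mathcal C_2)}\le Cr^\delta,
\]
for some small $r$, and $\|u\|_{C^{0,\eta}(\mathcal C_1)}\le C$ by the boundary H\"older estimate. Let $w$ be the harmonic replacement of $u$ in $\mathcal C_1$. Then $u-w$ vanishes on $\p\mathcal C_1$ and $\|u-w\|_{C^{0,\eta}(\mathcal C_1)}\le C$; as in the preceding theorems, it suffices to establish the analogues of \eqref{inf22} and \eqref{0gamma22}.

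Since $tr(AD^2u)=-{\bf b}\cdot\nabla u-cu$ and $\Delta w=0$, the difference satisfies $\Delta(u-w)=-tr((A-I)D^2u)-{\bf b}\cdot\nabla u-cu$ in $\mathcal C_1$. I would decompose $u-w=\psi_1+\psi_2$, with $\psi_1$ solving $\Delta\psi_1=-tr((A-I)D^2u)$, $\psi_1|_{\p\mathcal C_1}=0$, and $\psi_2=u-w-\psi_1$ solving $\Delta\psi_2=-{\bf b}\cdot\nabla u-cu$, $\psi_2|_{\p\mathcal C_1}=0$, and estimate the two pieces separately. The second-order perturbation $\psi_1$ is handled as in Theorem \ref{main3}: interior $W^{2,p}$-Calder\'on-Zygmund estimates applied at each scale $\rho=dist(x,\p\mathcal C_1)/2$ (legitimate thanks to $A\in C^{0,\gamma}$, the Carleson bound $\|u\|_{L^\infty(\mathcal C_1)}\le C$, and the smallness of the rescaled lower order coefficients) give quantitative control on $D^2u$ in the interior region $D'=\{dist(\cdot,\p\mathcal C_1)>r^\delta\}$; together with the boundary layer bound $\|\psi_1\|_{L^\infty(\mathcal C_1\setminus D')}\le Cr^{\delta\eta}$ (from $\psi_1\in C^{0,\eta}$ vanishing on $\p\mathcal C_1$) and the maximum principle for $\Delta$, this yields $\|\psi_1\|_{L^\infty(\mathcal C_1)}\le Cr^\beta$ and the analogous interior $C^{0,\alpha}$ estimate. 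The lower-order perturbation $\psi_2$ is treated verbatim as in Theorem \ref{main23}: Caccioppoli together with Carleson gives $\|\nabla u\|_{L^2(\mathcal C_1)}+\|u\|_{L^\infty(\mathcal C_1)}\le C$, so the right hand side lies in $L^q(\mathcal C_1)$ with $1/q=1/2+1/p$ and norm $\le Cr^\delta$; testing $\Delta\psi_2$ against $\psi_2$ and using the Sobolev embedding (valid since $p>n$, hence $q'<2^*$) gives $\|\psi_2\|_{H^1(\mathcal C_1)}\le Cr^\delta$, and interpolation with the global $C^{0,\eta}$ bound on $u-w$ yields the $C^{0,\alpha}$ estimate on $\psi_2$.

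The main obstacle is the quantitative interior control on $D^2u$ under only $L^p/L^{p/2}$ integrability of the lower order coefficients: this requires a careful $W^{2,p}$-Calder\'on-Zygmund bootstrap at each interior scale, combined with Morrey embedding, with constants that depend only on universal quantities and scale correctly with the distance to the boundary. Once \eqref{inf22} and \eqref{0gamma22} are established for $u-w$, the argument of Theorem \ref{main} closes unchanged, proving the non-tangential conclusion. Finally, for the second statement, when $A=A'$ on $\Gamma$ we freeze $A=A'$ at an arbitrary boundary point $x_0\in\Gamma$ and repeat the argument in a boundary ball centered at $x_0$; this promotes the non-tangential conclusion to the full $C^{0,\alpha}$ estimate on $\mathcal C_{1/2}$, as in Theorem \ref{main22}.
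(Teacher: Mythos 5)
Your overall architecture (dilate, compare $u$ to its harmonic replacement $w$, establish the analogues of \eqref{inf22} and \eqref{0gamma22}, then run the iteration of Theorem \ref{main}) matches the paper, but the way you handle the second–order perturbation is genuinely different from the paper's proof and, as written, has gaps. The paper never touches $D^2u$: although the displayed definition of $L_{A,{\bf b},c}$ reads $tr(A(x)D^2u)$, the proof treats the principal part in divergence form (this subsection is about divergence-form operators, and with $A$ only $C^{0,\gamma}$ the identity $tr(AD^2u)=div(A\nabla u)-(div A)\cdot\nabla u$ is unavailable, so the ``$tr$'' is evidently a slip). Concretely, the paper writes $f=-div((A-I)\nabla u)+g$ and, after multiplying by $u-w$ and integrating by parts, the extra term $E=\int_{\mathcal C_1}(A-I)\nabla u\cdot\nabla(u-w)\,dx$ is absorbed by H\"older and Young using only $|A-I|\le Cr^\gamma$ and the Caccioppoli--Carleson bound $\|\nabla u\|_{L^2(\mathcal C_1)}\le C$. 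This yields $\|u-w\|_{H^1(\mathcal C_1)}\le C(r^\delta+r^\gamma)$ in one stroke, and interpolation with \eqref{eta20} finishes exactly as in Theorem \ref{main23}. No Calder\'on--Zygmund theory, no bootstrap, no splitting.

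Your route through $\psi_1$ with $\Delta\psi_1=-tr((A-I)D^2u)$ runs into three concrete problems. First, under the divergence-form reading of the operator, $D^2u$ need not exist ($A\in C^{0,\gamma}$ gives at best $C^{1,\gamma}_{loc}$ regularity, and only when the right-hand side is good enough), so the decomposition itself is ill-posed. Second, even granting a non-divergence reading, with $c\in L^{p/2}$ the bootstrap cannot produce better than $D^2u\in L^{p/2}_{loc}$, so $\Delta\psi_1\in L^{p/2}(D')$ only; the ``maximum principle for $\Delta$'' you invoke to get $\|\psi_1\|_{L^\infty}\le Cr^\beta$ is the ABP estimate, which requires the right-hand side in $L^n$, and $p/2<n$ whenever $n<p<2n$. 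Third, the splitting $u-w=\psi_1+\psi_2$ requires a boundary $C^{0,\eta}$ bound on each piece separately (to control the layer $\mathcal C_1\setminus D'$ for $\psi_1$ and to interpolate the $H^1$ smallness of $\psi_2$); you only have such a bound for the sum $u-w$, inherited from $u$ and $w$, since neither right-hand side is individually controlled near the Lipschitz boundary. The fix is simply not to split and not to differentiate twice: keep the principal perturbation in divergence form inside the single energy identity, as in \eqref{resca} and the estimate of $E$ above. Your treatment of the lower-order terms and of the final statement ($A=A'$ on $\Gamma$, freezing at each boundary point as in Corollary \ref{coro}) is consistent with the paper.
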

\begin{proof}We apply the same proof as above, with $w$ being the harmonic replacement of $u$ in $\mathcal C_1$, and
\be\label{resca} \|A-I\|_{L^\infty} \leq C r^\gamma, \quad \|{\bf b}\|_{L^p} \leq C r^\delta, \quad \|c\|_{L^{p/2}(\mathcal C_2)} \leq C r^\delta.\ee 
Then 
$$f =  -div((A-I)\nabla u)+ g, \quad g:=-{\bf b} \cdot \nabla u + c u,$$ and the integration by parts gives,
$$\int_{\mathcal C_1} |\nabla (u-w)|^2 dx \leq C_0(\|g\|_{L^q(\mathcal C_1)} \|w-u\|_{L^{q'}(\mathcal C_1)} + E),$$
with 
$$E:=\int_{\mathcal C_1} (A-I)\nabla u \nabla(u-w)) \ dx.$$ To handle this extra term, we use H\"older's and Young's inequality and conclude that
$$E \leq r^\gamma(\frac {C_0}{2}\|\nabla u\|^2_{L^2(\mathcal C_1)}+\frac{1}{2C_0}\|\nabla (u-w)\|^2_{L^2(\mathcal C_1)}).$$
By Caccioppoli and Carleson estimates $$\|\nabla u\|_{L^2(\mathcal C_1)} \leq C \|u\|_{L^2(\mathcal C_1)} \leq C.$$
Thus,
$$\|w-u\|_{H^1(\mathcal C_1)} \leq C (r^\delta + r^\gamma),$$ from which the result follows as before.
\end{proof}

\end{document}